\newtheorem{theorem}{Theorem}
\theoremstyle{plain}
\newtheorem{corollary}{Corollary}
\newtheorem{problem}{Problem}
\newtheorem{proposition}{Proposition}
\newtheorem{remark}{Remark}
\numberwithin{equation}{section}
\begin{document}
\title[An $\epsilon$ characterization of a vertex on surfaces]{An $\epsilon$ characterization of a vertex formed by two non-overlapping geodesic arcs on surfaces with constant Gaussian curvature}
\author{Anastasios N. Zachos}
\address{Greek Ministry of Education, Athens, Greece}
\email{azachos@gmail.com}

\keywords{weighted Fermat-Torricelli problem, weighted
Fermat-Torricelli point, surfaces with constant Gaussian curvature, sphere, hyperboloid, circular cylinder, circular cone}
\subjclass{51E12, 52A10, 52A55, 51E10}
\begin{abstract}
We determine a positive real number (weight) which corresponds to the intersection point (vertex) of two non-overlapping geodesic arcs, which depends on the two weights which correspond to two points of these geodesic arcs, respectively, and an infinitesimal number $\epsilon.$ As a limiting case for $\epsilon\to 0,$ the triad of the corresponding weights yields a degenerate weighted Fermat-Torricelli tree which coincides with these two geodesic arcs. By applying this process for a geodesic triangle on a circular cone, we derive an $\epsilon$ characterization of conical points in $\mathbb{R}^{3}.$

\end{abstract}\maketitle

\section{Introduction}
%-----------------------------------------------------------------------------

Let $\triangle A_{1}A_{2}A_{3}$ be a geodesic triangle on a surface $S$ with constant Gaussian curvature in $\mathbb{R}^{3}.$
We denote by $A_{0}$ a point on $S,$ by $(a_{ij})_{g}$ the length of the geodesic arc $A_{i}A_{j}$  by $\angle A_{i}A_{j}A_{k}$ the angle formed by the geodesic arcs $A_{i}A_{j}$ and $A_{j}A_{k},$
at the vertex $A_{j},$ by $\vec{U}_{ij}$ is the unit tangent
vector of the geodesic arc $A_{i}A_{j}$ at $A_{i},$ for $i,j,k=0,1,2,3$ and by $B_{i}$ a positive real number (weight), which corresponds to each vertex $A_{i},$ for $i=1,2,3.$

We state the weighted Fermat-Torricelli problem for a $\triangle A_{1}A_{2}A_{3}$ on $S.$

\begin{problem}   Find a point $A_{0}$ (weighted Fermat-Torricelli
point) on $S,$  such that
\begin{equation}\label{minimumcs}
 f(A_{0})=\sum_{i=1}^{3}B_{i}(a_{i0})_{g} \to\ \min.
\end{equation}

\end{problem}

The solution of the weighted Fermat-Torricelli problem is called a branching solution (weighted Fermat-Torricelli tree) and consists of the three geodesic branches $\{A_{1}A_{0},A_{2}A_{0},A_{3}A_{0}\}$ which meet at the weighted
Fermat-Torricelli point $A_{0}.$

The following two propositions give a characterization of the weighted Fermat-Torricelli point $A_{0}$ on a smooth surface, which have been proved in \cite{Cots/Zach:11},\cite{Zachos/Cots:10}).

%--------------------------------------------------------------------------
\begin{proposition}[Floating Case]\cite{Cots/Zach:11},\cite{Zachos/Cots:10})
The following (I), (II), (III) conditions are equivalent:\\

(I) All the following inequalities are satisfied simultaneously:
\begin{equation}\label{cond120n}
\left\| B_{2}\vec{U}_{12}+B_{3}\vec{U}_{13}\right\|> B_{1},
\end{equation}

\begin{equation}\label{cond1202n}
\left\| B_{1}\vec{U}_{21}+B_{3}\vec{U}_{23}\right\|> B_{2},
\end{equation}

\begin{equation}\label{cond1203n}
\left\| B_{1}\vec{U}_{31}+B_{2}\vec{U}_{32}\right\|> B_{3},
\end{equation}
(II) The point $A_{0}$ is an interior point of the triangle
$\triangle A_{1}A_{2}A_{3}$ and does not belong to the geodesic arcs
$A_{i}A_{j}$ for $i,j=1,2,3$

(III) $B_{1}\vec{U}_{01}+B_{2}\vec{U}_{02}+B_{3}\vec{U}_{03}=\vec{0}.$

\end{proposition}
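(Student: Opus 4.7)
The plan is to prove the cyclic equivalence (II)$\Rightarrow$(III)$\Rightarrow$(I)$\Rightarrow$(II) using the first-order calculus of geodesic distance on a smooth surface. The workhorse fact is that, for $A_0$ distinct from $A_i$ and inside the injectivity radius of $A_i$ (ensured here by non-degeneracy and constant curvature), the function $A_0\mapsto(a_{i0})_g$ is smooth with intrinsic gradient $-\vec U_{0i}\in T_{A_0}S$; while at $A_0=A_i$ the function is convex with one-sided directional derivative in a unit direction $\vec v\in T_{A_i}S$ equal to $1$. Consequently $f$ is convex on a neighborhood of the geodesic triangle and attains its minimum at a unique point $A_0^{\ast}$.

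For (II)$\Rightarrow$(III), smoothness of $f$ at the interior point $A_0$ together with the vanishing of the intrinsic gradient $\nabla f(A_0)=-\sum_i B_i\vec U_{0i}=\vec 0$ gives the force-balance (III). For (III)$\Rightarrow$(I) I argue by contraposition: if some inequality of (I) fails, say $\|B_2\vec U_{12}+B_3\vec U_{13}\|\le B_1$, then for every unit $\vec v\in T_{A_1}S$,
\[ D_{\vec v}f(A_1)=B_1-\langle B_2\vec U_{12}+B_3\vec U_{13},\vec v\rangle\ge B_1-\|B_2\vec U_{12}+B_3\vec U_{13}\|\ge 0, \]
so by convexity $A_1$ is the unique global minimizer of $f$. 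But then no other point can satisfy $\sum_i B_i\vec U_{0i}=\vec 0$, contradicting (III) (which in addition forces $A_0\neq A_i$ in order that the $\vec U_{0i}$ be defined). For (I)$\Rightarrow$(II): maximizing the above directional-derivative computation over unit $\vec v$ shows that $A_i$ minimizes $f$ iff $\|B_j\vec U_{ij}+B_k\vec U_{ik}\|\le B_i$, so the strict inequalities (I) exclude each vertex as a minimizer. To exclude the open edges, suppose $A_0^{\ast}$ sits on the open arc $A_1A_2$; smoothness there forces $\sum_i B_i\vec U_{0i}=\vec 0$, and since $\vec U_{01}=-\vec U_{02}$ this reduces to $(B_1-B_2)\vec U_{01}+B_3\vec U_{03}=\vec 0$, which either gives $B_3\vec U_{03}=\vec 0$ (impossible since $B_3>0$) or forces $\vec U_{03}$ collinear with $A_1A_2$, contradicting non-degeneracy. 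Hence $A_0^{\ast}$ lies in the open triangle, i.e., (II) holds.

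The main technical obstacle is the subdifferential analysis at the vertices, where one must convert ``$A_i$ is a minimizer of $f$'' into the absorption inequality by carefully computing one-sided derivatives of the non-smooth distance $(a_{i0})_g$ at its base point and maximizing over unit tangent directions. Once this is in place, conditions (I) and (III) acquire a common mechanical meaning: (III) is Newtonian equilibrium of three weighted unit forces at an interior point, while (I) asserts that at no vertex is the local weight $B_i$ sufficient to absorb the combined pull $B_j\vec U_{ij}+B_k\vec U_{ik}$ exerted by the other two vertices.
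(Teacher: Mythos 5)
The paper does not prove this proposition at all: it is imported verbatim from the cited references \cite{Cots/Zach:11}, \cite{Zachos/Cots:10}, so there is no in-paper argument to compare against. Your proposal is essentially the standard variational proof that those references use — first variation of geodesic distance giving $\nabla (a_{i0})_g = -\vec U_{0i}$ away from $A_i$, one-sided derivatives and a maximization over unit tangent directions at the vertices to convert ``$A_i$ is a minimizer'' into the absorption inequality, and the collinearity argument to rule out the open edges. The cyclic scheme (II)$\Rightarrow$(III)$\Rightarrow$(I)$\Rightarrow$(II) is sound and your directional-derivative computation $D_{\vec v}f(A_1)=B_1-\langle B_2\vec U_{12}+B_3\vec U_{13},\vec v\rangle$ is correct.

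One point deserves more care than your parenthetical ``ensured here by non-degeneracy and constant curvature'': convexity of $A_0\mapsto (a_{i0})_g$ is \emph{not} automatic on a surface of constant positive curvature — on $S_K^2$ the distance function to a point is convex only on a ball of radius less than $\tfrac{\pi}{2\sqrt{K}}$, and fails to be convex (indeed fails to be defined single-valuedly) beyond the cut locus. Your appeal to a unique global minimizer $A_0^{\ast}$, and the step ``$A_1$ is the unique global minimizer, hence no other critical point exists,'' both lean on strict convexity of $f$ on a region containing the triangle. You should either restrict the triangle to lie in a convex ball (which is the hypothesis under which the cited results are actually proved) or replace the global convexity argument by a local one combined with completeness. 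With that hypothesis made explicit, the proof goes through.
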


\begin{proposition}[Absorbed Case]\cite{Cots/Zach:11},\cite{Zachos/Cots:10})
The following (I), (II) conditions are equivalent.\\(I) One of the
following inequalities is satisfied:
\begin{equation}\label{cond120}
\left\| B_{2}\vec{U}_{12}+B_{3}\vec{U}_{13}\right\|\le B_{1},
\end{equation}
or
\begin{equation}\label{cond1202}
\left\| B_{1}\vec{U}_{21}+B_{3}\vec{U}_{23}\right\|\le B_{2},
\end{equation}
or
\begin{equation}\label{cond1203}
\left\| B_{1}\vec{U}_{31}+B_{2}\vec{U}_{32}\right\|\le B_{3}.
\end{equation}

(II) The point $A_{0}$ is attained at $A_{1}$ or $A_{2}$ or $A_{3},$
respectively.

\end{proposition}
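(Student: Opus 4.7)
The plan is to prove Proposition 2 by analyzing the one-sided directional derivatives of $f$ at each vertex $A_{i}$, since Proposition 2 describes exactly when the minimizer $A_{0}$ degenerates to a vertex. The conditions of Propositions 1 and 2 are logically complementary: the three strict floating inequalities (\ref{cond120n})--(\ref{cond1203n}) fail simultaneously precisely when at least one absorbed inequality (\ref{cond120})--(\ref{cond1203}) holds, so once existence of a minimizer on $S$ is established (by compactness on a geodesic ball containing the $A_{i}$), Proposition 2 can essentially be obtained as the contrapositive of Proposition 1 combined with a first-order optimality analysis at the vertex.

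For the direction (I)$\Rightarrow$(II), suppose that inequality (\ref{cond120}) holds, so $\norm{B_{2}\vec{U}_{12}+B_{3}\vec{U}_{13}} \le B_{1}$. I would compute the one-sided directional derivative of $f$ at $A_{1}$ along the unit-speed geodesic leaving $A_{1}$ in an arbitrary direction $\vec{V}\in T_{A_{1}}S$ with $\norm{\vec{V}}=1$. The first variation formula for geodesic arc length (together with the fact that $B_{1}(a_{10})_{g}$ grows linearly with slope $B_{1}$ as $A_{0}$ leaves $A_{1}$) yields
$$D_{\vec{V}} f(A_{1}) \;=\; B_{1} \;-\; \vec{V}\cdot\bigl(B_{2}\vec{U}_{12}+B_{3}\vec{U}_{13}\bigr).$$
By Cauchy--Schwarz and the standing inequality, this is $\ge B_{1}-\norm{B_{2}\vec{U}_{12}+B_{3}\vec{U}_{13}}\ge 0$ for every admissible $\vec{V}$, so $A_{1}$ is a local minimum of $f$ on $S$. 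Conversely, if $A_{0}=A_{1}$ but (\ref{cond120}) fails strictly, I would pick $\vec{V}$ in the direction of $B_{2}\vec{U}_{12}+B_{3}\vec{U}_{13}$, which produces $D_{\vec{V}} f(A_{1})<0$ and contradicts minimality. Symmetric arguments at $A_{2}$ and $A_{3}$ cover the remaining cases.

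The main obstacle is to promote local minimality at the vertex to global minimality, which is what is actually needed to conclude $A_{0}=A_{1}$. On a general surface of constant Gaussian curvature the functional $f$ is not automatically globally convex (for example on the sphere or cone), so one must argue separately: either by invoking uniqueness of the weighted Fermat--Torricelli point on a suitably small geodesic neighborhood containing the triangle, or---preferably---by leveraging Proposition 1 to exclude every interior critical point once the relevant absorbed inequality holds, and then eliminating the other two vertices by the same first-order test performed at $A_{2}$ and $A_{3}$. A secondary technical point is the borderline equality case in (\ref{cond120}), where the first-order test degenerates; verifying that no strict descent occurs along the critical tangent direction would then require a second-variation computation along the corresponding geodesic, using the explicit form of the Jacobi equation in constant curvature.
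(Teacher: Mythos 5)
The paper does not actually prove this proposition: it is imported verbatim from \cite{Cots/Zach:11} and \cite{Zachos/Cots:10} and used as a black box, so there is no in-paper argument to compare against. Measured against the proof in those references, your proposal follows essentially the same route: the one-sided first variation of $f$ at the vertex, $D_{\vec{V}} f(A_{1})=B_{1}-\vec{V}\cdot(B_{2}\vec{U}_{12}+B_{3}\vec{U}_{13})$, Cauchy--Schwarz to get nonnegativity of all directional derivatives under (\ref{cond120}), and the choice $\vec{V}$ parallel to $B_{2}\vec{U}_{12}+B_{3}\vec{U}_{13}$ to produce strict descent when the inequality fails. That is the correct and standard mechanism, and your converse direction is also right.

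The one genuine issue is the local-to-global step, which you correctly flag but leave unresolved. The clean way to close it is not a second-variation/Jacobi-field computation but convexity of the objective along geodesics: on the Euclidean plane, the hyperbolic plane, and the developable surfaces considered here, $t\mapsto (a_{i\gamma(t)})_{g}$ is convex along any geodesic $\gamma$, hence $f$ is convex and ``all one-sided directional derivatives at $A_{1}$ are $\ge 0$'' already implies global minimality -- this also disposes of the borderline equality case in (\ref{cond120}) without any second-order analysis. On the sphere $S^{2}_{K}$ this convexity only holds when the triangle lies in a convex (e.g.\ open hemispherical) ball, which is the standing hypothesis in the cited references; you should state that restriction explicitly rather than appeal to a generic ``suitably small neighborhood.'' Your alternative suggestion -- combining existence of a minimizer with Proposition 1 to exclude interior points -- is circular as stated, because Proposition 1's equivalence of (I) and (II) is itself established by the same variational argument, so it cannot be used to rule out non-vertex minimizers independently.
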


The inverse weighted Fermat-Torricelli problem on $S$ states that:

\begin{problem}\cite{Cots/Zach:11},\cite{Zachos/Cots:10}
Given a point $A_{0}$ which belongs to the interior of $\triangle A_{1}A_{2}A_{3}$ on $M$,
does there exist a unique set of positive weights $B_{i},$ such
that
\begin{displaymath}
 B_{1}+B_{2}+B_{3}= c =const,
\end{displaymath}
for which $A_{0}$ minimizes
\begin{displaymath}
 f(A_{0})=B_{1}(a_{10})_{g}+B_{2}(a_{20})_{g}+B_{3}(a_{30})_{g}
\end{displaymath}
\end{problem}

A positive answer w.r. to the inverse weighted Fermat-Torricelli problem on a $C^{2}$ complete surface is given by the following proposition (\cite{Cots/Zach:11},\cite{Zachos/Cots:10}):
\begin{proposition}\label{propo5}\cite{Cots/Zach:11},\cite{Zachos/Cots:10}
The weight $B_{i}$ are uniquely determined by the formula:
\begin{equation}\label{inverse111}
B_{i}=\frac{C}{1+\frac{\sin{\angle A_{i}A_{0}A_{j}}}{\sin{\angle A_{j}A_{0}A_{k}}}+\frac{\sin{\angle A_{i}A_{0}A_{k}}}{\sin{\angle A_{j}A_{0}A_{k}}}},
\end{equation}
for $i,j,k=1,2,3$ and $i \neq j\neq k.$
\end{proposition}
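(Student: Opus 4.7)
The plan is to invoke the first-order necessary condition of Proposition 1 (Floating Case) and then interpret it as a triangle-of-forces in the tangent plane at $A_{0}$, so that the sine rule supplies the ratios of the weights. The normalization $B_{1}+B_{2}+B_{3}=C$ then fixes the scale.

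First, since $A_{0}$ is assumed to lie in the interior of $\triangle A_{1}A_{2}A_{3}$, the floating characterization (III) gives
\[
B_{1}\vec{U}_{01}+B_{2}\vec{U}_{02}+B_{3}\vec{U}_{03}=\vec{0}.
\]
The three unit vectors $\vec{U}_{01},\vec{U}_{02},\vec{U}_{03}$ all live in the tangent plane $T_{A_{0}}S$, which is two-dimensional, and the angle between $\vec{U}_{0i}$ and $\vec{U}_{0j}$ is precisely $\angle A_{i}A_{0}A_{j}$. So the vectors $B_{i}\vec{U}_{0i}$ form a closed planar triangle.

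Next I would apply the law of sines to this triangle of forces. When three vectors with magnitudes $B_{1},B_{2},B_{3}$ close up, each magnitude is proportional to the sine of the angle between the other two directions, giving
\[
\frac{B_{1}}{\sin\angle A_{2}A_{0}A_{3}}
=\frac{B_{2}}{\sin\angle A_{1}A_{0}A_{3}}
=\frac{B_{3}}{\sin\angle A_{1}A_{0}A_{2}}
=\lambda
\]
for a common constant $\lambda>0$. Summing and using $B_{1}+B_{2}+B_{3}=C$ solves
\[
\lambda=\frac{C}{\sin\angle A_{2}A_{0}A_{3}+\sin\angle A_{1}A_{0}A_{3}+\sin\angle A_{1}A_{0}A_{2}},
\]
so that $B_{i}=\lambda\sin\angle A_{j}A_{0}A_{k}$. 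Dividing the numerator and denominator of this expression by $\sin\angle A_{j}A_{0}A_{k}$ produces exactly formula \eqref{inverse111}. Uniqueness is immediate: the sine-rule ratios are forced up to the common scalar $\lambda$, and $\lambda$ is pinned down by the linear constraint $\sum B_{i}=C$.

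The only delicate point I expect is the passage from the vector balance in $\mathbb{R}^{3}$ to a planar triangle with the stated angles. This rests on two facts that should be recorded explicitly: the smoothness of $S$ (so that a tangent plane exists at $A_{0}$ and contains the three geodesic tangent vectors), and the sign convention that the angles $\angle A_{i}A_{0}A_{j}\in(0,\pi)$ sum to $2\pi$ around $A_{0}$, which is what one needs for the sine-rule identification $B_{i}\propto\sin\angle A_{j}A_{0}A_{k}$ to hold with positive coefficients. Everything else is algebraic rearrangement.
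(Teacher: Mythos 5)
Your argument is correct: the equilibrium condition $B_{1}\vec{U}_{01}+B_{2}\vec{U}_{02}+B_{3}\vec{U}_{03}=\vec{0}$ in the tangent plane at $A_{0}$, Lami's theorem (law of sines for a closed force triangle) giving $B_{i}\propto\sin\angle A_{j}A_{0}A_{k}$, and the normalization $\sum B_{i}=C$ together yield exactly \eqref{inverse111}, and your remark that the floating hypothesis forces each angle into $(0,\pi)$ so the proportionality constants are positive is the right thing to check. The present paper does not reprove this proposition but imports it from \cite{Cots/Zach:11},\cite{Zachos/Cots:10}, where the derivation is essentially the one you give, so your proposal matches the source argument.
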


If $B_{1},$ $B_{2},$ $B_{3}$ satisfy the inequalities of the floating case, we derive
a weighted Fermat-Torricelli tree $\{A_{1}A_{0}, A_{2}A_{0}, A_{3}A_{0}\}.$

The location of the weighted Fermat-Torricelli (floating) tree for geodesic triangles on the $K$ plane (sphere $S_{K}^2$, hyperboloid $H^2$) is given in \cite{Zachos:13}, \cite{Zachos 14b} and an analytical solution of the weighted Fermat-Torricelli
problem for an equilateral geodesic triangle with equal lengths $\frac{\pi}{2}$ is given in \cite{Zach:15}, for the weighted floating case. Concerning, the solution of the weighted Fermat-Torricelli problem for geodesic triangles on flat surfaces of revolution (Circular cylinder, circular cone) we refer to \cite{Zach:14a}.

If $B_{1},$ $B_{2},$ $B_{3}$ satisfy one of the inequalities of the absorbed case (\ref{cond120}) or (\ref{cond1202}) or (\ref{cond1203}), we obtain
a degenerate weighted Fermat-Torricelli tree $\{A_{2}A_{1},A_{1}A_{3}\},$ $\{A_{1}A_{2},A_{2}A_{3}\}$
and $\{A_{1}A_{3},A_{3}A_{2}\},$ respectively.

%--------------------------------------------------

For instance, if (\ref{cond120}) is valid, the minimum value of $B_{1}$
is determined by:

\[B_{1}^2=B_{2}^2+B_{3}^2+2B_{2}B_{3}\cos\angle A_{2}A_{1}A_{3}\]
 or

\[B_{1}=f(B_{2},B_{3}).\]

Thus, we consider the following problem:

\begin{problem}
How can we determine the values of $B_{2},$ $B_{3},$ such that $f(B_{2},B_{3})$ gives the minimum value of
$B_{1}$ that corresponds to the vertex $A_{1}$ on a surface with constant Gaussian curvature $S$?
\end{problem}

%-----------------------------------------------------------------------------------------------------------------------

%---------------------------------------------------

%---------------------------------------------------

%------Specific case of theorem

In this paper, we determine the value of $B_{1}$ by introducing an infinitesimal real number $\epsilon,$ ($\epsilon$  characterization of $A_{1}$)
such that:
$\angle A_{1}A_{2}A_{0}= \|\epsilon\|,$ $\angle A_{2}A_{0}A_{3}= \angle A_{2}A_{1}A_{3}+2\epsilon,$
$\angle A_{0}A_{3}A_{1}= k\|\epsilon\|$ for a rational number $k,$ by applying the solution of the inverse weighted
Fermat-Torricelli problem on $S.$
By setting $A_{1}$ to be the vertex of a (right) circular cone, we give an $\epsilon$ characterization of conical points in $\mathbb{R}^{3}.$

%--------------------------------------------------------------
%------------------------------------------------------------

\section{An $\epsilon$ characterization of the vertices of a triangle in $\mathbb{R}^{2}$}

Let $A_{0}$  be an interior point of $\triangle A_{1}A_{2}A_{3}$ in $\mathbb{R}^2.$

We denote by $(a_{ij})_{0}$ the length of the linear segment
$A_{i}A_{j}.$

We set $\angle A_{1}A_{2}A_{0}= \epsilon,$ $\angle A_{2}A_{0}A_{3}= \angle A_{2}A_{1}A_{3}+2\epsilon$
$\angle A_{0}A_{3}A_{1}= k\epsilon.$

%------------------------The weighted Fermat-Torricelli problem on flat euclidean surfaces----------------------------------------------------------------------------
%---------------------------------------

%-------------------------------------------------------------------------------------------------------------
%-------------------------------------------------------------------------------------------------------------

\begin{theorem}\label{echarR2}

The weight $B_{i}=B_{i}(\epsilon)$ are uniquely determined by the formula:
\begin{equation}\label{inverseB1e}
B_{1}=\frac{C}{1+\frac{\sin{\angle A_{1}A_{0}A_{3}}}{\sin{\angle A_{2}A_{0}A_{3}}}+\frac{\sin{\angle A_{1}A_{0}A_{2}}}{\sin{\angle A_{2}A_{0}A_{3}}}},
\end{equation}

\begin{equation}\label{inverseB2e}
B_{2}=\frac{C}{1+\frac{\sin{\angle A_{2}A_{0}A_{3}}}{\sin{\angle A_{1}A_{0}A_{3}}}+\frac{\sin{\angle A_{2}A_{0}A_{1}}}{\sin{\angle A_{1}A_{0}A_{3}}}},
\end{equation}

\begin{equation}\label{inverseB3e}
B_{3}=\frac{C}{1+\frac{\sin{\angle A_{3}A_{0}A_{1}}}{\sin{\angle A_{1}A_{0}A_{2}}}+\frac{\sin{\angle A_{3}A_{0}A_{2}}}{\sin{\angle A_{1}A_{0}A_{2}}}},
\end{equation}

where

\begin{equation}\label{alpha203}
\angle A_{2}A_{0}A_{3}= \angle A_{2}A_{1}A_{3}+2\epsilon
\end{equation}

\begin{equation}\label{alpha102}
\angle A_{1}A_{0}A_{2}=\angle A_{1}A_{0}A_{2}(\epsilon)=\operatorname{arccot} (-\frac{(a_{13})_{0}+(a_{12})_{0}\cos(\angle A_{2}A_{1}A_{3}+2\epsilon)}{(a_{12})_{0}\sin(\angle A_{2}A_{1}A_{3}+2\epsilon)}),
\end{equation}

\begin{equation}\label{alpha103}
\angle A_{1}A_{0}A_{3}=2\pi- \angle A_{2}A_{1}A_{3}-2\epsilon-\angle A_{1}A_{0}A_{2}(\epsilon).
\end{equation}

\end{theorem}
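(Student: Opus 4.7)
The three weight formulas (\ref{inverseB1e})--(\ref{inverseB3e}) are direct instances of the inverse formula (\ref{inverse111}) of Proposition~\ref{propo5} applied at the interior point $A_{0}$ of $\triangle A_{1}A_{2}A_{3}\subset\mathbb{R}^{2}$, viewed as a surface of constant Gaussian curvature zero. The substance of the theorem therefore lies entirely in determining the three angles at $A_{0}$ as explicit functions of $\epsilon$. Relation (\ref{alpha203}) is the standing hypothesis on $\angle A_{2}A_{0}A_{3}$, and because the three angles at the interior point $A_{0}$ sum to $2\pi$, identity (\ref{alpha103}) for $\angle A_{1}A_{0}A_{3}$ is immediate as soon as $\angle A_{1}A_{0}A_{2}$ has been pinned down. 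Hence the whole content is the derivation of (\ref{alpha102}).

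For that step I plan to place $A_{1}=(0,0)$, $A_{2}=((a_{12})_{0},0)$, and $A_{3}=\bigl((a_{13})_{0}\cos\angle A_{2}A_{1}A_{3},\,(a_{13})_{0}\sin\angle A_{2}A_{1}A_{3}\bigr)$ in Cartesian coordinates. The condition $\angle A_{1}A_{2}A_{0}=\epsilon$ forces $A_{0}$ to lie on the ray from $A_{2}$ with direction $(-\cos\epsilon,\sin\epsilon)$, so I parameterize $A_{0}=\bigl((a_{12})_{0}-t\cos\epsilon,\,t\sin\epsilon\bigr)$ with $t=(a_{20})_{0}>0$. Then $\overrightarrow{A_{0}A_{2}}$ has polar angle $-\epsilon$, and the hypothesis on $\angle A_{2}A_{0}A_{3}$ forces $\overrightarrow{A_{0}A_{3}}$ to have polar angle $\angle A_{2}A_{1}A_{3}+\epsilon$.

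The parameter $t$ is then determined from the resulting linear system for $A_{3}$; using $\sin(\angle A_{2}A_{1}A_{3}+\epsilon)\cos\epsilon+\cos(\angle A_{2}A_{1}A_{3}+\epsilon)\sin\epsilon=\sin(\angle A_{2}A_{1}A_{3}+2\epsilon)$ one obtains
\[
t=\frac{(a_{12})_{0}\sin(\angle A_{2}A_{1}A_{3}+\epsilon)-(a_{13})_{0}\sin\epsilon}{\sin(\angle A_{2}A_{1}A_{3}+2\epsilon)}.
\]
With $t$ known, $\angle A_{1}A_{0}A_{2}$ is read off via the dot and signed cross products of $\overrightarrow{A_{0}A_{1}}$ and $\overrightarrow{A_{0}A_{2}}$, which collapse to
\[
\cot\angle A_{1}A_{0}A_{2}=\frac{t-(a_{12})_{0}\cos\epsilon}{(a_{12})_{0}\sin\epsilon}.
\]
Substituting the value of $t$ above and invoking the telescoping identity $\sin(\angle A_{2}A_{1}A_{3}+\epsilon)-\cos\epsilon\sin(\angle A_{2}A_{1}A_{3}+2\epsilon)=-\cos(\angle A_{2}A_{1}A_{3}+2\epsilon)\sin\epsilon$ makes the common factor $\sin\epsilon$ cancel and delivers exactly (\ref{alpha102}).

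The main obstacle is keeping the trigonometric simplification clean and, in particular, choosing the correct branch so that $\operatorname{arccot}$ returns the interior angle in $(0,\pi)$ (which corresponds to the geometric fact that the numerator in (\ref{alpha102}) is negative, forcing $\angle A_{1}A_{0}A_{2}$ to be obtuse for small $\epsilon$); apart from that bookkeeping, every step is either a direct invocation of Proposition~\ref{propo5} or an elementary angle-sum observation on the three angles meeting at the interior point $A_{0}$.
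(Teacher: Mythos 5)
Your proof is correct: the placement $A_{1}=(0,0)$, $A_{2}=((a_{12})_{0},0)$, the value $t=(a_{20})_{0}=\bigl((a_{12})_{0}\sin(\angle A_{2}A_{1}A_{3}+\epsilon)-(a_{13})_{0}\sin\epsilon\bigr)/\sin(\angle A_{2}A_{1}A_{3}+2\epsilon)$, and the two identities you invoke do reduce $\cot\angle A_{1}A_{0}A_{2}=(t-(a_{12})_{0}\cos\epsilon)/((a_{12})_{0}\sin\epsilon)$ exactly to (\ref{alpha102}), while (\ref{alpha203}), (\ref{alpha103}) and the appeal to Proposition~\ref{propo5} are handled as the paper does. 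The middle step, however, follows a genuinely different route. The paper first uses the angle sum of $\triangle A_{2}A_{0}A_{3}$ to force $k=1$, i.e.\ $\angle A_{0}A_{3}A_{1}=\epsilon$, and then applies the law of sines in $\triangle A_{0}A_{2}A_{1}$ and $\triangle A_{0}A_{3}A_{1}$, eliminating the common side $(a_{01})_{0}$ to get $(a_{12})_{0}/\sin\angle A_{1}A_{0}A_{2}=-(a_{13})_{0}/\sin(\angle A_{2}A_{1}A_{3}+2\epsilon+\angle A_{1}A_{0}A_{2})$, which unwinds to (\ref{alpha102}) in one line. You instead use only the angle conditions at $A_{2}$ and at $A_{0}$, locate $A_{0}$ explicitly as a ray intersection, and eliminate $t$; the condition at $A_{3}$ then emerges as a consequence rather than an input. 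The synthetic version is shorter; yours is more self-contained, makes the existence and uniqueness of $A_{0}$ explicit, and explains why $k$ is forced to equal $1$. One small inaccuracy: your parenthetical claim that $(a_{13})_{0}+(a_{12})_{0}\cos(\angle A_{2}A_{1}A_{3}+2\epsilon)$ is always positive (so that $\angle A_{1}A_{0}A_{2}$ is obtuse) can fail when $\angle A_{2}A_{1}A_{3}$ is obtuse and $(a_{13})_{0}$ is small relative to $(a_{12})_{0}$; the essential point that $\operatorname{arccot}$ must be read with range $(0,\pi)$ stands and the proof is unaffected.
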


\begin{proof}

From $\triangle A_{2}A_{0}A_{3},$ we obtain:

\[\angle A_{1}A_{2}A_{3}-\epsilon+\angle A_{2}A_{1}A_{3}+2\epsilon+\angle A_{1}A_{3}A_{2}-k\epsilon=\pi,\]
which yields

\[k=1.\]

By applying the law of sines in $\triangle A_{0}A_{2}A_{1},$ $\triangle A_{0}A_{3}A_{1},$
we derive:

\begin{equation}\label{alpha102sin}
\frac{(a_{01})_{0}}{\sin \epsilon}=\frac{(a_{12})_{0}}{\sin \angle A_{1}A_{0}A_{2}}=
\frac{(a_{13})_{0}}{-\sin (\angle A_{2}A_{1}A_{3}+2\epsilon+\angle A_{1}A_{0}A_{2} )}.
\end{equation}

From (\ref{alpha102sin}), we obtain (\ref{alpha102}).

By replacing (\ref{alpha102}), (\ref{alpha203}), (\ref{alpha103}) in (\ref{inverse111}), we obtain
$B_{i}=B_{i}(\epsilon),$ for $i=1,2,3.$

\end{proof}

\begin{corollary}
For $\epsilon\to 0,$ we derive a degenerate weighted Fermat-Torricelli tree $\{A_{2}A_{1}, A_{1}A_{3}\}.$
\end{corollary}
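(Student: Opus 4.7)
The plan is to argue geometrically that the interior Fermat-Torricelli point $A_{0}(\epsilon)$ supplied by Theorem~\ref{echarR2} converges to the vertex $A_{1}$ as $\epsilon\to 0$, so that the three-branch tree $\{A_{0}A_{1},A_{0}A_{2},A_{0}A_{3}\}$ collapses to the two-segment path $\{A_{2}A_{1},A_{1}A_{3}\}$: the branch $A_{0}A_{1}$ shrinks to the point $A_{1}$, while $A_{0}A_{2}\to A_{2}A_{1}$ and $A_{0}A_{3}\to A_{1}A_{3}$.

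The quantitative input comes from the law of sines identity (\ref{alpha102sin}) used inside the proof of Theorem~\ref{echarR2}, which yields
\begin{equation*}
(a_{01})_{0}=\frac{(a_{12})_{0}\sin\epsilon}{\sin\angle A_{1}A_{0}A_{2}(\epsilon)}.
\end{equation*}
Since $\sin\epsilon\to 0$, the convergence $(a_{01})_{0}\to 0$ follows provided $\sin\angle A_{1}A_{0}A_{2}(\epsilon)$ is bounded away from zero. This is ensured by formula (\ref{alpha102}): at $\epsilon=0$ the angle equals $\operatorname{arccot}\left(-\frac{(a_{13})_{0}+(a_{12})_{0}\cos\angle A_{2}A_{1}A_{3}}{(a_{12})_{0}\sin\angle A_{2}A_{1}A_{3}}\right)$, and for any non-degenerate triangle $\triangle A_{1}A_{2}A_{3}$ the denominator $\sin\angle A_{2}A_{1}A_{3}$ is strictly positive, so the argument of $\operatorname{arccot}$ is finite and the limit angle lies in the open interval $(0,\pi)$. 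Consequently $\sin\angle A_{1}A_{0}A_{2}(\epsilon)$ stays bounded below by a positive constant, $(a_{01})_{0}=O(\epsilon)\to 0$, and $A_{0}(\epsilon)\to A_{1}$.

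For consistency with the Absorbed Case proposition, one should also verify that the limiting weights $B_{i}^{\ast}=\lim_{\epsilon\to 0}B_{i}(\epsilon)$ determined by (\ref{inverseB1e})-(\ref{inverseB3e}) satisfy the absorbed inequality (\ref{cond120}) at $A_{1}$. Each $B_{i}(\epsilon)$ depends continuously on $\epsilon$ through the angles at $A_{0}$, and for every $\epsilon>0$ the triple $(B_{1},B_{2},B_{3})$ lies in the Floating Case region of the Floating Case proposition, since $A_{0}(\epsilon)$ is an interior Fermat-Torricelli point. Passing to the limit, continuity forces the triple onto the boundary of that region, i.e.\ onto the equality $\|B_{2}^{\ast}\vec{U}_{12}+B_{3}^{\ast}\vec{U}_{13}\|=B_{1}^{\ast}$, which is the boundary form of (\ref{cond120}) and certifies $\{A_{2}A_{1},A_{1}A_{3}\}$ as a genuine degenerate weighted Fermat-Torricelli tree.

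The main obstacle in this plan is the second step: one must confirm that the limit angle $\angle A_{1}A_{0}A_{2}(0)$ stays strictly inside $(0,\pi)$, since otherwise the ratio $\sin\epsilon/\sin\angle A_{1}A_{0}A_{2}$ could fail to vanish and the geometric collapse $A_{0}\to A_{1}$ would be lost. This is where the non-degeneracy of $\triangle A_{1}A_{2}A_{3}$ enters essentially, and the same non-degeneracy implicitly rules out the possibility that an inequality at $A_{2}$ or $A_{3}$ is saturated instead of the one at $A_{1}$.
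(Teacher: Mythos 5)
The paper states this corollary without any proof, so there is nothing to compare line by line; your argument supplies the missing justification, and it does so in a way that is consistent with the machinery the paper has already set up. The core step is correct: the law of sines relation (\ref{alpha102sin}) gives $(a_{01})_{0}=(a_{12})_{0}\sin\epsilon/\sin\angle A_{1}A_{0}A_{2}(\epsilon)$, and since (\ref{alpha102}) shows that $\angle A_{1}A_{0}A_{2}(\epsilon)$ tends to a value of $\operatorname{arccot}$ of a finite quantity (finiteness requiring exactly the non-degeneracy $\sin\angle A_{2}A_{1}A_{3}\neq 0$ that you point out), the limit angle lies in $(0,\pi)$, its sine is bounded below, and $A_{0}(\epsilon)\to A_{1}$; the three branches then collapse to $\{A_{2}A_{1},A_{1}A_{3}\}$ as claimed.

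The one soft spot is your final step: ``passing to the limit, continuity forces the triple onto the boundary'' of the floating region. Continuity of $\epsilon\mapsto B_{i}(\epsilon)$ only yields that the limit triple lies in the \emph{closure} of that region, i.e.\ $\bigl\lVert B_{2}^{\ast}\vec{U}_{12}+B_{3}^{\ast}\vec{U}_{13}\bigr\rVert\ge B_{1}^{\ast}$; by itself it does not exclude that the limit stays in the open region. To land exactly on the equality you need one more ingredient: either (a) the observation that the weighted Fermat--Torricelli point depends continuously on the weights, so the limit weights have their minimizer at $\lim A_{0}(\epsilon)=A_{1}$ and the Absorbed Case proposition supplies the reverse inequality (\ref{cond120}); or (b) a direct computation from (\ref{inverse111}), which gives $B_{1}^{\ast}:B_{2}^{\ast}:B_{3}^{\ast}=\sin\angle A_{2}A_{1}A_{3}:\,-\sin(\angle A_{2}A_{1}A_{3}+\theta):\sin\theta$ with $\theta=\angle A_{1}A_{0}A_{2}(0)$, from which the identity $B_{1}^{\ast 2}=B_{2}^{\ast 2}+B_{3}^{\ast 2}+2B_{2}^{\ast}B_{3}^{\ast}\cos\angle A_{2}A_{1}A_{3}$ follows by elementary trigonometry. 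This is precisely the relation $B_{1}=f(B_{2},B_{3})$ that the paper records in its introduction, so either repair closes the gap cleanly; as written, the step is asserted rather than proved.
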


\section{An $\epsilon$ characterization of the vertices of a geodesic triangle on the $K$-plane}

Let $\triangle A_{1}A_{2}A_{3}$ be a geodesic triangle on the $K$ plane.
The $K$ plane is a sphere $S_{K}^{2}$ of radius $R=\frac{1}{\sqrt{K}}$
and a hyperbolic plane $H_{K}^{2}$ with constant Gaussian curvature $-K$ for $K<0$
in $\mathbb{R}^{3}.$

We set
\begin{displaymath}
\kappa = \left\{ \begin{array}{ll}
\sqrt{K} & \textrm{if $K>0$,}\\
i\sqrt{-K} & \textrm{if $K<0$.}\\
\end{array} \right.
\end{displaymath}

The unified law of cosines and law of sines on the $K$ plane is given in \cite{BNik:07}.

We set $\angle A_{1}A_{2}A_{0}= \|\epsilon\|,$ $\angle A_{2}A_{0}A_{3}= \angle A_{2}A_{1}A_{3}+2\epsilon,$
$\angle A_{0}A_{3}A_{1}= \frac{\|\epsilon\|}{2}.$

%------------------------The weighted Fermat-Torricelli problem on flat euclidean surfaces----------------------------------------------------------------------------
%---------------------------------------

%-------------------------------------------------------------------------------------------------------------
%-------------------------------------------------------------------------------------------------------------

We set

\begin{displaymath}
\epsilon = \left\{ \begin{array}{ll}
\|\epsilon \| & \textrm{if $K>0$,}\\
-\|\epsilon \| & \textrm{if $K<0$.}\\
\end{array} \right.
\end{displaymath}

\begin{theorem}\label{echarKplane}

The weight $B_{i}=B_{i}(\epsilon)$ are uniquely determined by the formula:
\begin{equation}\label{inverseB1eK}
B_{1}=\frac{C}{1+\frac{\sin{\angle A_{1}A_{0}A_{3}}}{\sin{\angle A_{2}A_{0}A_{3}}}+\frac{\sin{\angle A_{1}A_{0}A_{2}}}{\sin{\angle A_{2}A_{0}A_{3}}}},
\end{equation}

\begin{equation}\label{inverseB2eK}
B_{2}=\frac{C}{1+\frac{\sin{\angle A_{2}A_{0}A_{3}}}{\sin{\angle A_{1}A_{0}A_{3}}}+\frac{\sin{\angle A_{2}A_{0}A_{1}}}{\sin{\angle A_{1}A_{0}A_{3}}}},
\end{equation}

\begin{equation}\label{inverseB3eK}
B_{3}=\frac{C}{1+\frac{\sin{\angle A_{3}A_{0}A_{1}}}{\sin{\angle A_{1}A_{0}A_{2}}}+\frac{\sin{\angle A_{3}A_{0}A_{2}}}{\sin{\angle A_{1}A_{0}A_{2}}}},
\end{equation}

where

\begin{equation}\label{alpha203K}
\angle A_{2}A_{0}A_{3}= \angle A_{2}A_{1}A_{3}+2\epsilon
\end{equation}

\begin{equation}\label{alpha102K}
\angle A_{1}A_{0}A_{2}=\operatorname{arccot} (-\frac{\sin(\kappa(a_{13})_{g})+2\sin(\kappa(a_{12})_{g})\cos(\frac{\|\epsilon\|}{2})\cos(\angle A_{2}A_{1}A_{3}+2\epsilon)}{2\sin\kappa(a_{12})_{g})\cos(\frac{\|\epsilon\|}{2})\sin(\angle A_{2}A_{1}A_{3}+2\epsilon)}),
\end{equation}

\begin{equation}\label{alpha103K}
\angle A_{1}A_{0}A_{3}=2\pi- \angle A_{2}A_{1}A_{3}-2\epsilon-\angle A_{1}A_{0}A_{2}(\epsilon).
\end{equation}

\end{theorem}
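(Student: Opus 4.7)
The plan is to follow the three-step pattern of Theorem \ref{echarR2} with the Euclidean law of sines replaced by the unified law of sines on the $K$-plane from \cite{BNik:07}. Since the weight formulas (\ref{inverseB1eK})--(\ref{inverseB3eK}) are just Proposition \ref{propo5} applied after the three angles at $A_{0}$ have been identified, the essential work reduces to establishing (\ref{alpha102K}); relation (\ref{alpha203K}) is built into the hypothesis, and (\ref{alpha103K}) follows at once from the identity $\angle A_{1}A_{0}A_{2}+\angle A_{2}A_{0}A_{3}+\angle A_{1}A_{0}A_{3}=2\pi$ at the interior point $A_{0}$.

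To establish (\ref{alpha102K}), I would apply the unified law of sines in the two sub-triangles $\triangle A_{0}A_{2}A_{1}$ and $\triangle A_{0}A_{3}A_{1}$, which share the geodesic arc $A_{0}A_{1}$. In the first, the angle $\|\epsilon\|$ is opposite $(a_{01})_{g}$ and $\angle A_{1}A_{0}A_{2}$ is opposite $(a_{12})_{g}$; in the second, the angle $\frac{\|\epsilon\|}{2}$ is opposite $(a_{01})_{g}$ and $\angle A_{1}A_{0}A_{3}$ is opposite $(a_{13})_{g}$. Equating the two resulting expressions for $\sin(\kappa(a_{01})_{g})$ and using the half-angle identity $\sin\|\epsilon\|=2\sin(\|\epsilon\|/2)\cos(\|\epsilon\|/2)$ to cancel the common factor $\sin(\|\epsilon\|/2)$ yields
\[2\cos\bigl(\tfrac{\|\epsilon\|}{2}\bigr)\sin(\kappa(a_{12})_{g})\sin\angle A_{1}A_{0}A_{3}=\sin(\kappa(a_{13})_{g})\sin\angle A_{1}A_{0}A_{2}.\]

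The next step is to substitute $\angle A_{1}A_{0}A_{3}=2\pi-(\angle A_{2}A_{1}A_{3}+2\epsilon)-\angle A_{1}A_{0}A_{2}$ from (\ref{alpha103K}); its sine becomes $-\sin\bigl((\angle A_{2}A_{1}A_{3}+2\epsilon)+\angle A_{1}A_{0}A_{2}\bigr)$. Expanding via the sine addition formula and dividing through by $\sin\angle A_{1}A_{0}A_{2}$ produces a linear equation in $\cot\angle A_{1}A_{0}A_{2}$, whose solution is exactly (\ref{alpha102K}). Inserting (\ref{alpha203K}), (\ref{alpha102K}), (\ref{alpha103K}) into the inverse Fermat--Torricelli formula (\ref{inverse111}) then gives the three weight expressions.

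The main obstacle is essentially bookkeeping: one must keep the sign convention $\epsilon=\pm\|\epsilon\|$ consistent with the two cases $K>0$ and $K<0$, and verify that the identity $\sin(\kappa a)$ continues to give the same real equation in the hyperbolic regime, where $\kappa$ is imaginary and $\sin(\kappa a)=i\sinh(\sqrt{-K}\,a)$. This is automatic because the law of sines is a ratio in which the global factor $i$ cancels, so (\ref{alpha102K}) is a real equation in both cases. The arccot branch must further be chosen so that, as $\|\epsilon\|\to 0$, the angle $\angle A_{1}A_{0}A_{2}$ tends to $\pi-\angle A_{2}A_{1}A_{3}$, consistent with the degeneration of the Fermat--Torricelli tree onto $\{A_{2}A_{1},A_{1}A_{3}\}$.
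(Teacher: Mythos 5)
Your proposal is correct and follows essentially the same route as the paper: the law of sines in the two sub-triangles $\triangle A_{0}A_{2}A_{1}$ and $\triangle A_{0}A_{3}A_{1}$, the half-angle identity $\sin\|\epsilon\|=2\sin(\|\epsilon\|/2)\cos(\|\epsilon\|/2)$ to eliminate $\sin(\kappa(a_{01})_{g})$, and the substitution $\sin\angle A_{1}A_{0}A_{3}=-\sin(\angle A_{2}A_{1}A_{3}+2\epsilon+\angle A_{1}A_{0}A_{2})$ to solve for $\cot\angle A_{1}A_{0}A_{2}$. The paper merely writes the second law of sines with the negative sine of the sum already built in, and your added remarks on the cancellation of $i$ in the hyperbolic case and the arccot branch are sensible checks the paper leaves implicit.
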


\begin{proof}
By applying the law of sines in $\triangle A_{0}A_{2}A_{1},$ we get:

\begin{equation}\label{lawksine1}
\frac{\sin (\kappa (a_{10})_{g})}{\sin\|\epsilon\|}=\frac{\sin (\kappa (a_{12})_{g})}{\sin\angle A_{2}A_{0}A_{1}}
\end{equation}

or

\begin{equation}\label{lawksine12}
\frac{\sin (\kappa (a_{10})_{g})}{\sin\frac{|\epsilon\|}{2}}\frac{1}{2 \cos\frac{|\epsilon\|}{2}}=\frac{\sin (\kappa (a_{12})_{g})}{\sin\angle A_{2}A_{0}A_{1}}.
\end{equation}

By applying the law of sines in $\triangle A_{0}A_{1}A_{3},$ we get:

\begin{equation}\label{lawksine2}
\frac{\sin (\kappa (a_{10})_{g})}{\sin\frac{\|\epsilon\|}{2}}=-\frac{\sin (\kappa (a_{13})_{g})}{\sin(\angle A_{2}A_{1}A_{3}+2\epsilon+\angle A_{2}A_{0}A_{1})}
\end{equation}

By replacing (\ref{lawksine2}) in (\ref{lawksine12}), we derive:

\begin{equation}\label{lawksine12s}
-\frac{\sin (\kappa (a_{13})_{g})}{\sin(\angle A_{2}A_{1}A_{3}+2\epsilon+\angle A_{2}A_{0}A_{1})}\frac{1}{2 \cos\frac{|\epsilon\|}{2}}=\frac{\sin (\kappa (a_{12})_{g})}{\sin\angle A_{2}A_{0}A_{1}}.
\end{equation}

By solving (\ref{lawksine12s}) w.r. to $\angle A_{2}A_{0}A_{1},$ we obtain (\ref{alpha102K}).

\end{proof}

\section{An $\epsilon$ characterization of the vertices of a geodesic triangle on flat surfaces of revolution}

In this section, we shall give an $\epsilon$ characterization of the vertices of geodesic triangles
on flat surfaces of revolution (Circular Cylinder $S$ and  Circular Cone $C$) which are flat Euclidean Surfaces with zero Gaussian curvature.

\subsection{An $\epsilon$ characterization of the vertices of a geodesic triangle on a circular cylinder.}

The parametric form of a (right) circular cylinder $S$
of unit radius  and axis (axis of revolution) the z-axis:

$\vec{r}(u,v)=(\cos v,\sin v,u).$

The geodesics of the circular cylinder are
the straight lines on the circular cylinder parallel to the
$z$-axis, the circles obtained by intersecting the circular
cylinder with planes parallel to the $xy$-plane and circular
helixes of the parametric form $\vec{r}(t)=(\cos t, \sin t,
bt+c).$

Let $\triangle A_{1}A_{2}A_{3}$ be a geodesic triangle on $S$ which is
composed of three circular helixes.

We set
$A_{1}=(1,0,0),$ $A_{2}=(\cos\varphi_{2},\sin\varphi_{2},z_{2})$
$A_{3}=(\cos\varphi_{3},\sin\varphi_{3},z_{3})$ and
$\vec{r}_{ij}=(\cos t, \sin t, b_{ij} t)$ the
circular helix on $S$ from $A_{i}$ to $A_{j}$ for $i,j=1,2,3, i\ne
j$ and $\varphi_{2},\varphi_{3}\in (0,\pi).$

The coefficient $b_{ij}$ is called the step of the helix from $A_{i}$ to $A_{j}.$
The step of the helices $b_{12}$ from $A_{1}$ to $A_{2}$ and $b_{13}$ from $A_{1}$ to $A_{3}$
are given by:
\[b_{12}=\frac{z_{2}}{\varphi_{2}}\]
and
\[b_{13}=\frac{z_{3}}{\varphi_{3}}.\]

A cylindrical law of cosines for geodesic
triangles on $S$ composed of three circular helixes is given in \cite{Zach:14a}.
\begin{proposition}\cite{Zach:14a}\label{propcoslaw} The following formula holds
for $\triangle A_{1}A_{2}A_{3}$ on S:

\begin{equation}\label{equationcoslaw}
(1+b_{23}^{2})(\varphi_{2}-\varphi_{3})^{2}=(1+b_{12}^{2})\varphi_{2}^{2}+(1+b_{13}^{2})\varphi_{3}^{2}-2\sqrt{(1+b_{12}^{2})(1+b_{13}^{2})}\varphi_{2}\varphi_{3}\cos\alpha_{213}.
\end{equation}

\end{proposition}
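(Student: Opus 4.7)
The plan is to exploit the fact that the right circular cylinder has vanishing Gaussian curvature, so it is flat in the intrinsic sense and can be unrolled by a local isometry onto the Euclidean plane. Under the unrolling $(\cos v,\sin v,u)\mapsto(v,u)$, each helix $\vec{r}_{ij}(t)=(\cos t,\sin t,b_{ij}t)$ is sent to the affine segment $t\mapsto(t,b_{ij}t)$, so the three geodesic helices composing $\triangle A_1 A_2 A_3$ correspond to three straight line segments in the plane, and the geodesic triangle lifts to a genuine Euclidean triangle with the same side lengths and the same angle at $A_1$.

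Concretely, I would first compute the geodesic lengths directly from the parametrization. The speed of $\vec{r}_{ij}$ is $\sqrt{1+b_{ij}^{2}}$, so that with $\varphi_{1}=0$ one obtains
\[
(a_{12})_{g}=\varphi_{2}\sqrt{1+b_{12}^{2}},\qquad (a_{13})_{g}=\varphi_{3}\sqrt{1+b_{13}^{2}},\qquad (a_{23})_{g}=|\varphi_{2}-\varphi_{3}|\sqrt{1+b_{23}^{2}}.
\]
Equivalently, lifting $A_{1}$ to the origin of the unrolled plane, $A_{2}$ lifts to $(\varphi_{2},z_{2})=(\varphi_{2},b_{12}\varphi_{2})$ and $A_{3}$ to $(\varphi_{3},z_{3})=(\varphi_{3},b_{13}\varphi_{3})$, and a direct distance computation reproduces the three expressions above.

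Because the unrolling is a local isometry, it preserves the angle $\alpha_{213}$ between the initial tangent vectors of the two helices emanating from $A_{1}$. The Euclidean law of cosines applied to the planar triangle then gives
\[
(a_{23})_{g}^{2}=(a_{12})_{g}^{2}+(a_{13})_{g}^{2}-2(a_{12})_{g}(a_{13})_{g}\cos\alpha_{213},
\]
and substituting the length formulas above yields precisely \eqref{equationcoslaw}.

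The only subtle point is making sure that the three helices from the distinct vertices lift consistently to a single planar triangle: one must choose the correct sheet of the universal cover $\mathbb{R}^{2}\to S$ so that the lifts of $A_{2}$ and $A_{3}$ are joined by the lift of $\vec{r}_{23}$ (rather than by a translate of it). Once the angular parameters $\varphi_{2},\varphi_{3}\in(0,\pi)$ are fixed in this range and $b_{23}=(z_{3}-z_{2})/(\varphi_{3}-\varphi_{2})$ is used, the straight segment from $(\varphi_{2},z_{2})$ to $(\varphi_{3},z_{3})$ in the plane does project back to $\vec{r}_{23}$, so the lift is coherent and the planar law of cosines applies without modification.
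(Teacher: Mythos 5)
Your proof is correct, and it is essentially the argument the paper relies on: the paper imports this proposition from \cite{Zach:14a} without reproving it here, but in the proof of the theorem that follows it uses exactly your ingredients --- unrolling the cylinder isometrically onto $\mathbb{R}^{2}$, the helix length formulas $(a_{1j})=\sqrt{1+b_{1j}^{2}}\,\varphi_{j}$, and the planar law of cosines. Your remark about choosing a coherent lift so that the segment joining the lifts of $A_{2}$ and $A_{3}$ projects back to $\vec{r}_{23}$ is the one genuinely delicate point, and you handle it correctly via the restriction $\varphi_{2},\varphi_{3}\in(0,\pi)$ and the identification $b_{23}=(z_{3}-z_{2})/(\varphi_{3}-\varphi_{2})$.
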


We denote by $(a_{ij})_{S}$ the length of the geodesic arc from $A_{i}$ to $A_{j}.$

\begin{theorem}\label{echarR2}

The weight $B_{i}=B_{i}(\epsilon)$ are uniquely determined by the formula:
\begin{equation}\label{inverseB1ess}
B_{1}=\frac{C}{1+\frac{\sin{\angle A_{1}A_{0}A_{3}}}{\sin{\angle A_{2}A_{0}A_{3}}}+\frac{\sin{\angle A_{1}A_{0}A_{2}}}{\sin{\angle A_{2}A_{0}A_{3}}}},
\end{equation}

\begin{equation}\label{inverseB2ess}
B_{2}=\frac{C}{1+\frac{\sin{\angle A_{2}A_{0}A_{3}}}{\sin{\angle A_{1}A_{0}A_{3}}}+\frac{\sin{\angle A_{2}A_{0}A_{1}}}{\sin{\angle A_{1}A_{0}A_{3}}}},
\end{equation}

\begin{equation}\label{inverseB3ess}
B_{3}=\frac{C}{1+\frac{\sin{\angle A_{3}A_{0}A_{1}}}{\sin{\angle A_{1}A_{0}A_{2}}}+\frac{\sin{\angle A_{3}A_{0}A_{2}}}{\sin{\angle A_{1}A_{0}A_{2}}}},
\end{equation}

where

\begin{equation}\label{alpha203ss}
\angle A_{2}A_{0}A_{3}= \angle A_{2}A_{1}A_{3}+2\epsilon
\end{equation}

\begin{equation}\label{alpha102ss}
\angle A_{1}A_{0}A_{2}=\angle A_{1}A_{0}A_{2}(\epsilon)=\operatorname{arccot} (-\frac{\sqrt{1+b_{13}^2}\varphi_{3}+\sqrt{1+b_{12}^2}\varphi_{2}\cos(\angle A_{2}A_{1}A_{3}+2\epsilon)}{(\sqrt{1+b_{12}^2}\varphi_{2}\sin(\angle A_{2}A_{1}A_{3}+2\epsilon)}),
\end{equation}

\begin{equation}\label{alpha103ss}
\angle A_{1}A_{0}A_{3}=2\pi- \angle A_{2}A_{1}A_{3}-2\epsilon-\angle A_{1}A_{0}A_{2}(\epsilon).
\end{equation}

\end{theorem}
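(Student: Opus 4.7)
The plan is to mimic the proof of Theorem 1 (the planar case), exploiting the fact that the circular cylinder is a flat surface of revolution with zero Gaussian curvature. Since $S$ is developable, a geodesic triangle composed of three circular helixes unrolls isometrically onto a Euclidean triangle whose side lengths are the geodesic lengths $(a_{ij})_S$; the angles at the vertices are preserved under this developing map, and these geodesic arc lengths are read off from the helical parametrization as
\[
(a_{ij})_S=\sqrt{1+b_{ij}^{2}}\,\varphi_{ij},
\]
which is the same identity that is implicit in the cylindrical law of cosines of Proposition \ref{propcoslaw}.

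First I would verify that the rational constant $k$ appearing in $\angle A_{0}A_{3}A_{1}=k\|\epsilon\|$ still equals $1$. Because the intrinsic angle sum of $\triangle A_{2}A_{0}A_{3}$ is $\pi$ (the surface being flat), the same algebra as in Theorem 1 forces $k=1$, so the given hypotheses are consistent. Next, I would apply the Euclidean law of sines in the two sub-triangles $\triangle A_{0}A_{2}A_{1}$ and $\triangle A_{0}A_{3}A_{1}$ of the unrolled picture, substituting $(a_{12})_S=\sqrt{1+b_{12}^{2}}\,\varphi_{2}$ and $(a_{13})_S=\sqrt{1+b_{13}^{2}}\,\varphi_{3}$, to obtain
\[
\frac{(a_{01})_S}{\sin\epsilon}=\frac{\sqrt{1+b_{12}^{2}}\,\varphi_{2}}{\sin\angle A_{1}A_{0}A_{2}}=-\frac{\sqrt{1+b_{13}^{2}}\,\varphi_{3}}{\sin(\angle A_{2}A_{1}A_{3}+2\epsilon+\angle A_{1}A_{0}A_{2})}.
\]
Eliminating $(a_{01})_S$, expanding $\sin(\alpha+\beta)$, and dividing through by $\sin\angle A_{1}A_{0}A_{2}$ produces a linear equation in $\cot\angle A_{1}A_{0}A_{2}$, whose solution is exactly formula (\ref{alpha102ss}).

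The identity (\ref{alpha103ss}) is then immediate from the fact that the three angles at $A_{0}$ sum to $2\pi$, combined with (\ref{alpha203ss}). Finally, substituting (\ref{alpha203ss}), (\ref{alpha102ss}), and (\ref{alpha103ss}) into the inverse weighted Fermat-Torricelli formula (\ref{inverse111}) of Proposition \ref{propo5} yields (\ref{inverseB1ess})--(\ref{inverseB3ess}).

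The main obstacle I anticipate is a clean justification that the angles at the weighted Fermat-Torricelli point $A_{0}$ and the geodesic arc lengths used in Proposition \ref{propo5} are correctly captured by the developed planar picture; this is a consequence of the local isometry between $S$ and $\mathbb{R}^{2}$ near $A_{0}$, but one has to be careful that the helical segments $A_{1}A_{0}$, $A_{2}A_{0}$, $A_{3}A_{0}$ really lie in a common developing chart so that the sine-law computation above is valid globally for the configuration in question. Once this is granted, the remainder of the argument is a direct translation of Theorem 1 with $(a_{1j})_{0}$ replaced by $\sqrt{1+b_{1j}^{2}}\,\varphi_{j}$.
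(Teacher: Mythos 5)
Your proposal is correct and follows essentially the same route as the paper: unroll the cylinder isometrically to the plane, identify the helix lengths $(a_{12})_S=\sqrt{1+b_{12}^{2}}\,\varphi_{2}$ and $(a_{13})_S=\sqrt{1+b_{13}^{2}}\,\varphi_{3}$, substitute them into the planar formula of Theorem~\ref{echarR2} (equivalently, redo its law-of-sines computation in the developed picture), and then apply Proposition~\ref{propo5}. The only difference is cosmetic — you re-derive the planar angle formula inline and explicitly recheck $k=1$, whereas the paper simply cites Theorem~1 after the length substitution.
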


\begin{proof}

Unrolling the cylinder $S$ in terms of the vertex $A_{1},$ we derive
an isometric mapping from $S$ to $\mathbb{R}^{2},$ which yields:

\begin{equation}\label{isomlengths}
(a_{ij})_{S}=(a_{ij})_{0}, for\quad i,j=1,2,3.
\end{equation}

From (\ref{isomlengths}), we have:

\begin{equation}\label{a12ss}
(a_{12})_{S}=(a_{12})_{0}
\end{equation}

and

\begin{equation}\label{a13ss}
(a_{13})_{S}=(a_{13})_{0}
\end{equation}

The Euclidean distances $(a_{12})_{0}$ and $(a_{13})_{0}$ are given by:

\begin{equation}\label{a1200}
(a_{12})_{0}=\sqrt{z_{2}^2+\varphi_{2}^2}
\end{equation}

or
\begin{equation}\label{a1200s}
(a_{12})_{0}=\sqrt{b_{12}^2+1} \varphi_{2}
\end{equation}

and

\begin{equation}\label{a1300}
(a_{13})_{0}=\sqrt{z_{3}^2+\varphi_{3}^2}
\end{equation}

or
\begin{equation}\label{a1300s}
(a_{13})_{0}=\sqrt{b_{13}^2+1} \varphi_{3}
\end{equation}

Thus, the length of the circular helix with parametric form $\vec{r}_{12}$ from $A_{1}$
to $A_{2}$ and with parametric form $\vec{r}_{13}$ from $A_{1}$ to $A_{3}$ is given by:

\begin{equation}\label{a12ss}
(a_{12})_{s}=\sqrt{1+b_{12}^2}\varphi_{2}.
\end{equation}

and

\begin{equation}\label{a13ss}
(a_{13})_{s}=\sqrt{1+b_{13}^2}\varphi_{3}.
\end{equation}

By replacing (\ref{a12ss}) and (\ref{a13ss}) in \ref{alpha102}, we obtain (\ref{alpha102ss})
and by applying Theorem~1, we derive (\ref{inverseB1ess}), (\ref{inverseB2ess}) and (\ref{inverseB3ess}).
The weights $B_{1},$ $B_{2},$ $B_{3}$ depend on the angle $\angle A_{2}A_{1}A_{3},$ $\epsilon$ and the step of the helices $b_{12}$ and $b_{13}.$

\end{proof}

%--------------------------------------------------------------------------

\subsection{An $\epsilon$ characterization of the vertices of a geodesic triangle on a circular cone.}

We consider the parametric form of a (right) circular cone $C$ with a
unit base radius.

$\vec{r}(u,v)=\left(\left(1-\frac{u}{H}\right)\cos
v,\left(1-\frac{u}{H}\right)\sin v,u\right),$ $ 0<u \le H,
0<v<2\pi.$

The geodesic equations on  $C$ are given in
\cite[Exercise~5.2.14, Subsection~5.6.2 pp.~222,
pp.~247-248]{Oprea:07}.

Let $\triangle A_{1}A_{2}A_{3}$ be a geodesic triangle on
$C.$

We denote by $P$ the center of the unit bases circle of
$S^{\prime},$ $H$ the distance $AP,$ by $A_{ip}$ the intersection of the line defined by
the linear segment $AA_{i}$ with the unit bases circle $c(P,1)$
for $i=0,1,2,3,$  by $\varphi_{2}$ the angle $\angle A_{1}PA_{2p}$
by $\varphi_{3}$ the angle $\angle A_{1}PA_{3p}$ and by
$\varphi_{0}$ the angle $\angle A_{1}PA_{0p}.$ 

By unrolling the circular cone $C$ w.r. to $A_{1}A,$ (cut along $A_{1}A$) we
derive an isometric mapping from $C$ to $\mathbb{R}^{2}.$

Thus, we get:
\begin{equation}\label{isomcone1}
(a_{ij})_{g}=(a_{ij})_{0}
\end{equation}

By setting $A_{1}=(0,0),$ we obtain:

\begin{equation}\label{anglephi}
\varphi=\frac{2\pi}{\sqrt{1+H^{2}}},
\end{equation}

\begin{equation}\label{anglephi2}
\angle A_{1}AA_{2}=\frac{\varphi_{2}}{\sqrt{1+H^{2}}},
\end{equation}

\begin{equation}\label{anglephi3}
\angle A_{1}AA_{3}=\frac{\varphi_{3}}{\sqrt{1+H^{2}}}
\end{equation}

and

\begin{equation}\label{anglephi0}
\angle A_{1}AA_{0}=\frac{\varphi_{0}}{\sqrt{1+H^{2}}}.
\end{equation}

where

\begin{equation}\label{aia}
A_{i}A=\sqrt{1+H^2}.
\end{equation}

\begin{theorem}\label{echarR2c}

The weight $B_{i}=B_{i}(\epsilon)$ are uniquely determined by the formula:
\begin{equation}\label{inverseB1ec}
B_{1}=\frac{C}{1+\frac{\sin{\angle A_{1}A_{0}A_{3}}}{\sin{\angle A_{2}A_{0}A_{3}}}+\frac{\sin{\angle A_{1}A_{0}A_{2}}}{\sin{\angle A_{2}A_{0}A_{3}}}},
\end{equation}

\begin{equation}\label{inverseB2ec}
B_{2}=\frac{C}{1+\frac{\sin{\angle A_{2}A_{0}A_{3}}}{\sin{\angle A_{1}A_{0}A_{3}}}+\frac{\sin{\angle A_{2}A_{0}A_{1}}}{\sin{\angle A_{1}A_{0}A_{3}}}},
\end{equation}

\begin{equation}\label{inverseB3ec}
B_{3}=\frac{C}{1+\frac{\sin{\angle A_{3}A_{0}A_{1}}}{\sin{\angle A_{1}A_{0}A_{2}}}+\frac{\sin{\angle A_{3}A_{0}A_{2}}}{\sin{\angle A_{1}A_{0}A_{2}}}},
\end{equation}

where

\begin{equation}\label{alpha203c}
\angle A_{2}A_{0}A_{3}= \angle A_{2}A_{1}A_{3}+2\epsilon
\end{equation}

\begin{equation}\label{alpha102c}
\angle A_{1}A_{0}A_{2}=\angle A_{1}A_{0}A_{2}(\epsilon)=\operatorname{arccot} (-\frac{(a_{13})_{c}+(a_{12})_{c}\cos(\angle A_{2}A_{1}A_{3}+2\epsilon)}{(a_{12})_{c}\sin(\angle A_{2}A_{1}A_{3}+2\epsilon)}),
\end{equation}

\begin{equation}\label{alpha103c}
\angle A_{1}A_{0}A_{3}=2\pi- \angle A_{2}A_{1}A_{3}-2\epsilon-\angle A_{1}A_{0}A_{2}(\epsilon).
\end{equation}

and

\begin{equation}\label{a12c}
(a_{12})_{c}=\sqrt{(1+H^2)+(A_{2}A)^2-2\sqrt{1+H^2}(A_{2}A)\cos(\frac{\varphi_{2}}{\sqrt{1+H^{2}}})} ,
\end{equation}

\begin{equation}\label{a13c}
(a_{13})_{c}=\sqrt{(1+H^2)+(A_{3}A)^2-2\sqrt{1+H^2}(A_{3}A)\cos(\frac{\varphi_{3}}{\sqrt{1+H^{2}}})}.
\end{equation}

\end{theorem}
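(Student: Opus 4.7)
The plan is to transport Theorem~1 to the cone via the standard unrolling isometry. Cutting $C$ along the slant generator through $A_1$ and unrolling into $\mathbb{R}^2$ gives a local isometry that preserves geodesic lengths and the angles between geodesics; in particular the geodesic triangle $\triangle A_1A_2A_3$ on $C$ lifts to a rectilinear triangle whose side lengths are $(a_{ij})_c$ and whose interior angles coincide with the cone angles, exactly as recorded in (\ref{isomcone1}). Once this transfer is in place, the conclusions (\ref{inverseB1ec})--(\ref{inverseB3ec}), the hypothesis (\ref{alpha203c}) and the triangle-sum identity (\ref{alpha103c}) follow at once by applying Theorem~1 to the unrolled image, so the only genuine task is evaluating the two unrolled distances $(a_{12})_c$ and $(a_{13})_c$ in terms of the cone data.

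For this I would work inside the two unrolled planar triangles $\triangle A_1AA_2$ and $\triangle A_1AA_3$, where $A$ denotes the image of the apex. The slant edges $A_iA$ unroll to straight segments preserving their cone lengths, and by (\ref{aia}) the edge $A_1A$ has length $\sqrt{1+H^2}$. The rescaling formulas (\ref{anglephi2})--(\ref{anglephi3}) give the apex angles $\angle A_1AA_2 = \varphi_2/\sqrt{1+H^2}$ and $\angle A_1AA_3 = \varphi_3/\sqrt{1+H^2}$. Applying the Euclidean law of cosines in each of these two planar triangles produces (\ref{a12c}) and (\ref{a13c}) in a single step, and substituting those expressions into the general planar formula (\ref{alpha102}) immediately yields (\ref{alpha102c}). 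The weight expressions then drop out of Proposition~\ref{propo5} exactly as in the proof of Theorem~1.

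The main obstacle, which is more bookkeeping than real difficulty, is controlling the range of the unrolling. One must verify that cutting along the generator through $A_1$ places the whole triangle $\triangle A_1A_2A_3$ inside a single unrolled sector of total angle $2\pi/\sqrt{1+H^2}$ given by (\ref{anglephi}), so that $A_2$ and $A_3$ admit unambiguous planar images and (\ref{anglephi2})--(\ref{anglephi3}) apply without branch issues; one also wants to ensure that the branch of $\operatorname{arccot}$ chosen in (\ref{alpha102c}) matches the one used in (\ref{alpha102}) of Theorem~1, so that the resulting $\angle A_1A_0A_2(\epsilon)$ is a genuine interior angle of the unrolled Fermat-Torricelli configuration. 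Granted these conventions, the argument is a direct reduction to the planar theorem.
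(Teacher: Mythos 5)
Your proposal follows essentially the same route as the paper: unroll the cone along the generator through $A_{1}$, apply the Euclidean law of cosines in the planar triangles $\triangle A_{1}AA_{2}$ and $\triangle A_{1}AA_{3}$ with the slant length $\sqrt{1+H^{2}}$ and the rescaled apex angles $\varphi_{i}/\sqrt{1+H^{2}}$ to obtain $(a_{12})_{c}$ and $(a_{13})_{c}$, and then invoke Theorem~1. Your added remarks on keeping the triangle inside a single unrolled sector and on the branch of $\operatorname{arccot}$ are sensible caveats the paper leaves implicit, but they do not change the argument.
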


\begin{proof}

From the law of cosines in $\triangle A_{1}AA_{2},$ we get: 
\begin{equation}\label{a12c2}
(a_{12})_{c}=\sqrt{(A_{1}A)+(A_{2}A)^2-2(A_{1}A)(A_{2}A)\cos(\angle A_{1}AA_{2})} .
\end{equation}

By replacing (\ref{aia}) and (\ref{anglephi2}) in (\ref{a12c2}),
we obtain (\ref{a12c}).

From the law of cosines in $\triangle A_{1}AA_{3},$ we get:

\begin{equation}\label{a13c2}
(a_{13})_{c}=\sqrt{(A_{1}A)^2+(A_{3}A)^2-2 A_{1}A(A_{3}A)\cos(\angle A_{1}AA_{3})}.
\end{equation}

By replacing (\ref{aia}) and (\ref{anglephi3}) in (\ref{a13c2}),
we obtain (\ref{a13c}).
Unrolling $C$ along $A_{1}A$ yields an isometric mapping from $C$ to $\mathbb{R}^{2}$ and by applying theorem~1,
we obtain (\ref{inverseB1ec}), (\ref{inverseB2ec}) and (\ref{inverseB3ec}).
The weights $B_{1},$ $B_{2},$ $B_{3}$ depend on $\epsilon,$ $\varphi_{2},$ $\varphi_{3}$ and $H.$ 

\end{proof}

By setting $A_{1}\equiv A,$ we obtain an $\epsilon$ characterization of the conical vertex $A$ in $\mathbb{R}^3.$

\begin{proposition}\label{echarR3c}

The weight $B_{i}=B_{i}(\epsilon)$ are uniquely determined by the formula:
\begin{equation}\label{inverseB1ec}
B_{1}=\frac{C}{1+\frac{\sin{\angle A_{1}A_{0}A_{3}}}{\sin{\angle A_{2}A_{0}A_{3}}}+\frac{\sin{\angle A_{1}A_{0}A_{2}}}{\sin{\angle A_{2}A_{0}A_{3}}}},
\end{equation}

\begin{equation}\label{inverseB2ec}
B_{2}=\frac{C}{1+\frac{\sin{\angle A_{2}A_{0}A_{3}}}{\sin{\angle A_{1}A_{0}A_{3}}}+\frac{\sin{\angle A_{2}A_{0}A_{1}}}{\sin{\angle A_{1}A_{0}A_{3}}}},
\end{equation}

\begin{equation}\label{inverseB3ec}
B_{3}=\frac{C}{1+\frac{\sin{\angle A_{3}A_{0}A_{1}}}{\sin{\angle A_{1}A_{0}A_{2}}}+\frac{\sin{\angle A_{3}A_{0}A_{2}}}{\sin{\angle A_{1}A_{0}A_{2}}}},
\end{equation}

where

\begin{equation}\label{alpha203c}
\angle A_{2}A_{0}A_{3}= \angle A_{2}A_{1}A_{3}+2\epsilon
\end{equation}

\begin{equation}\label{alpha102c}
\angle A_{1}A_{0}A_{2}=\angle A_{1}A_{0}A_{2}(\epsilon)=\operatorname{arccot} (-\frac{AA_{3}+AA_{2}\cos(\angle A_{2}A_{1}A_{3}+2\epsilon)}{(AA_{2}\sin(\angle A_{2}A_{1}A_{3}+2\epsilon)}),
\end{equation}

\begin{equation}\label{alpha103c}
\angle A_{1}A_{0}A_{3}=2\pi- \angle A_{2}A_{1}A_{3}-2\epsilon-\angle A_{1}A_{0}A_{2}(\epsilon).
\end{equation}

\end{proposition}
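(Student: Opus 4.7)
The plan is to obtain Proposition~\ref{echarR3c} as a direct specialization of Theorem~\ref{echarR2c} in which the base vertex $A_{1}$ of the geodesic triangle is pulled onto the apex $A$ of the circular cone $C$. The entire argument will reduce to identifying what the geodesic lengths $(a_{12})_{c}$ and $(a_{13})_{c}$ become in this limit; once these are in hand, the weight formulas, the equation $\angle A_{2}A_{0}A_{3}=\angle A_{2}A_{1}A_{3}+2\epsilon$, and the closure identity $\angle A_{1}A_{0}A_{3}=2\pi-\angle A_{2}A_{1}A_{3}-2\epsilon-\angle A_{1}A_{0}A_{2}(\epsilon)$ all carry over verbatim from Theorem~\ref{echarR2c}.

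The key geometric observation I would record first is that every generating line of the right circular cone is a geodesic emanating from the apex and in fact realizes the shortest path from $A$ to any point lying on that generator. Consequently, once we impose $A_{1}\equiv A$, the geodesic arcs $A_{1}A_{2}$ and $A_{1}A_{3}$ on $C$ collapse onto the Euclidean segments $AA_{2}$ and $AA_{3}$ in $\mathbb{R}^{3}$, so that
\[
(a_{12})_{c}=AA_{2},\qquad (a_{13})_{c}=AA_{3}.
\]
Inserting these into (\ref{alpha102c}) of Theorem~\ref{echarR2c} produces at once the arccotangent formula (\ref{alpha102c}) claimed in the proposition, and the auxiliary cosine-law expressions (\ref{a12c})--(\ref{a13c}) of the theorem are simply no longer needed. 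The three weight identities then follow by feeding the resulting angles $\angle A_{i}A_{0}A_{j}$ into the inverse Fermat-Torricelli formula of Proposition~\ref{propo5}.

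The one genuinely delicate point I expect is that $A$ is a conical singularity, so the smoothness hypothesis underlying Proposition~\ref{propo5} is not literally met at $A_{1}=A$. I would sidestep this by working on the unrolled development of $C$, which is a flat Euclidean sector of opening $\varphi=2\pi/\sqrt{1+H^{2}}$, applying Theorem~\ref{echarR2} to the planar image of the triangle, and transporting the conclusion back by the isometry away from the cut. The intrinsic angle $\angle A_{2}A_{1}A_{3}$ at the apex enters only as a datum and is itself read off in the unrolled picture, so the angular defect at $A$ creates no real obstruction; it is solely the identification $(a_{12})_{c}=AA_{2}$ and $(a_{13})_{c}=AA_{3}$ that the conical specialization forces upon the formulas, after which the proof is a direct transcription of the argument for Theorem~\ref{echarR2c}.
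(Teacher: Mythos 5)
Your proposal is correct and matches the paper's (essentially implicit) argument: the paper derives Proposition~\ref{echarR3c} simply by setting $A_{1}\equiv A$ in Theorem~\ref{echarR2c}, which amounts to exactly your observation that the geodesics from the apex are the straight generators, so $(a_{12})_{c}=AA_{2}$ and $(a_{13})_{c}=AA_{3}$ replace the cosine-law expressions. Your additional care about the conical singularity, handled via the unrolled flat sector, goes beyond what the paper records but does not change the route.
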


\begin{remark}
The deviation of $\|B_{1}-B_{1}(\epsilon) \|$
where

\[B_{1}=\sqrt{B_{2}(\epsilon)^2+B_{3}(\epsilon)^2+2 B_{2}(\epsilon)B_{3}(\epsilon)\cos\angle A_{3}A_{1}A_{2}}\]
gives an error estimate which depends on $\epsilon.$
\end{remark}

\end{document}